\theoremstyle{plain}
\newtheorem{thm}{Theorem}[section]
\newtheorem{lem}[thm]{Lemma}
\newtheorem*{MainResult}{Main Result}
\newtheorem*{KeyLemma}{Key Lemma}
\newtheorem{cor}[thm]{Corollary}
\theoremstyle{definition}
\newtheorem{defn}{Definition}
\theoremstyle{remark}
\title{At Least Half Of All Graphs Satisfy $\chi \leq  \textstyle\frac{1}{4}\omega + \textstyle \frac{3}{4}\Delta + 1$}
\author{landon rabern}
\begin{document}
\begin{abstract}
We prove that for any graph $G$ at least one of $G$ or $\bar{G}$ satisfies
$\chi \leq \textstyle\frac{1}{4}\omega +  \textstyle\frac{3}{4}\Delta + 1$.
In particular, self-complementary graphs satisfy this bound.
\end{abstract}

\maketitle

\section{Introduction}
In \cite{reed97} Reed conjectured that

\begin{equation}\label{ReedsConjecture}
\chi \leq \left \lceil \frac{\omega + \Delta + 1}{2} \right \rceil.
\end{equation}

In the same paper he proved that there exists $\epsilon > 0$ such that
\[\chi \leq \epsilon\omega + (1 - \epsilon)\Delta + 1,\]
holds for every graph.  The $\epsilon$ used in the proof is quite small (less than $10^{-8}$).\newline

We prove the following.

\begin{MainResult}
Let $G$ be a graph.  Then at least one of $G$ or $\bar{G}$ satisfies
\[\chi \leq \textstyle\frac{1}{4}\omega +  \textstyle\frac{3}{4}\Delta + 1.\]
\end{MainResult}

To prove this we combine a result from \cite{ColoringGraphsContainingADoublyCriticalEdge} on graphs containing a doubly critical edge with the following lemma.

\begin{KeyLemma}
Every graph satisfies $\chi \leq \frac{\iota + \omega + \Delta + n + 2}{4}$.
\end{KeyLemma}

Here $\iota$ is the maximum number of singleton color classes appearing in an optimal coloring of the graph (formally defined below).

\section{Stinginess}

In  \cite{ColoringGraphsContainingADoublyCriticalEdge} it was shown that a doubly critical edge is enough to imply an upper bound on the chromatic number that is slightly weaker than Reed's conjectured upper bound.

\begin{lem}\label{BarelyStingyImpliesAlmostReed}
If $G$ is a graph containing a doubly critical edge, then
\[\chi(G) \leq \textstyle\frac{1}{3}\omega(G) + \textstyle\frac{2}{3}(\Delta(G) + 1).\]
\end{lem}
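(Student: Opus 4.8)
The plan is to rephrase the target inequality as a clique‑hunt. Clearing denominators, $\chi \le \frac13\omega + \frac23(\Delta+1)$ is equivalent to $\omega \ge 3\chi - 2\Delta - 2$, so it suffices to produce, in any graph with a doubly critical edge, a clique on at least $3\chi - 2\Delta - 2$ vertices. I would first reduce to the vertex‑critical case, and here the hypothesis already does useful work: if $uv$ is doubly critical with $\chi(G) = k$, then $\chi(G-u-v) = k-2$ forces $\chi(G-u) = k-1$ (it is at most $\chi(G-u-v)+1$ and cannot equal $k-2$, else $\chi(G) \le k-1$), so both endpoints are critical vertices. Consequently, deleting any \emph{other} vertex that can be removed without lowering $\chi$, and iterating, yields a vertex‑critical subgraph $G^{*}$ that still contains $u$, $v$, and the edge $uv$; double‑criticality of $uv$ persists since $\chi(G^{*}-u-v) \ge k-2$ always while $\chi(G^{*}-u-v) \le \chi(G-u-v) = k-2$. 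Because $\omega(G^{*}) \le \omega(G)$ and $\Delta(G^{*}) \le \Delta(G)$, a clique of size $3k - 2\Delta(G^{*}) - 2$ in $G^{*}$ gives $\omega(G) \ge 3k - 2\Delta(G) - 2$, so I may assume $G$ itself is vertex‑critical.

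Next I would extract the structural payload of the doubly critical edge. Fix an optimal coloring $c$ of $H := G - u - v$ with colors $\{1,\dots,k-2\}$. If some color were missing from the neighborhood of $u$ in $H$, I could assign that color to $u$ and a fresh color to $v$, producing a $(k-1)$‑coloring of $G$ and contradicting $\chi(G)=k$; hence $u$, and by symmetry $v$, sees all $k-2$ colors on its neighbors. Promoting $\{u\}$ and $\{v\}$ to their own classes then gives an optimal $k$‑coloring in which $u$ and $v$ are singleton classes, each adjacent to a vertex of every other color. This is exactly the stingy configuration organizing the section, and it immediately yields $\deg(u),\deg(v) \ge k-1$, hence $\Delta \ge k-1$.

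The heart of the matter is to upgrade this degree data into a clique of size $3k - 2\Delta - 2$. Since two vertices sharing a color under $c$ are non‑adjacent, any clique meets each color class at most once; thus a clique through the edge $uv$ must be assembled by selecting, for suitable colors $i$, a single common neighbor $w_i$ of $u$ and $v$ of color $i$ that is moreover adjacent to all previously chosen $w_j$. I would construct such a family by a fan/Kempe‑chain argument rooted at $uv$: at a color $i$ for which no admissible common neighbor yet exists, I would swap the two‑colored $(i,j)$‑components incident to the color‑$i$ neighbors of $u$ and $v$, choosing $j$ so that the all‑colors property at both $u$ and $v$ is preserved, and argue that if every such swap failed to create the desired common neighbor then some resulting coloring would admit the forbidden $(k-1)$‑extension. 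Iterating over colors and controlling how many colors can fail to contribute — in terms of the deficiency $\Delta - (k-1)$ of the degrees of $u$ and $v$ — is what I expect to deliver exactly $3k - 2\Delta - 2$ mutually adjacent vertices.

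I expect this clique construction to be the main obstacle, for two intertwined reasons. First, Kempe swaps for different colors interfere, so one must ensure that repairing one color does not destroy the work done for another; keeping the all‑colors property invariant throughout the recoloring is the delicate bookkeeping. Second, the count must degrade gracefully with $\Delta$: at the extreme $\Delta = k-1$ the target clique has size $k$, which is precisely the regime where vertex‑criticality makes $G$ a connected $(k-1)$‑regular graph, so Brooks' theorem forces $G = K_k$ (odd cycles being excluded, since one checks directly that they contain no doubly critical edge), whereas for larger $\Delta$ the target shrinks and the fan argument has room to spare. Once a clique of size $3k - 2\Delta - 2$ is secured, $\omega \ge 3\chi - 2\Delta - 2$ rearranges to $\chi \le \frac13\omega + \frac23(\Delta+1)$, completing the proof.
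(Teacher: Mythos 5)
You should first note that this paper never proves Lemma \ref{BarelyStingyImpliesAlmostReed}: it imports it as a black box from \cite{ColoringGraphsContainingADoublyCriticalEdge}, where it is the main theorem of that entire paper. So your proposal has to stand on its own as a complete proof, and it does not. The parts you actually carry out are correct but routine: the rearrangement to ``find a clique of size $3\chi - 2\Delta - 2$,'' the reduction to a vertex-critical graph still containing the doubly critical edge $uv$, and the observation that in any optimal coloring of $G - u - v$ both $u$ and $v$ see all $\chi - 2$ colors (hence $\deg(u), \deg(v) \geq \chi - 1$, and incidentally $\iota(G) \geq 2$, which is the direction this paper actually exploits in Theorem \ref{MainDisjunct}).

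The genuine gap is that the clique construction --- which you yourself identify as ``the heart of the matter'' and ``the main obstacle'' --- is never performed; it is stated entirely in the conditional and expectational mood (``I would swap\ldots,'' ``I expect\ldots to deliver exactly $3k - 2\Delta - 2$ mutually adjacent vertices''). Three specific things are missing. First, you give no argument that two chosen common neighbors $w_i, w_j$ of $u$ and $v$ can be forced to be adjacent; a Kempe swap that fails to create an admissible vertex at color $i$ yields a contradiction only if you can show the swapped coloring permits the forbidden $(k-1)$-extension, and that implication is exactly what needs proof. Second, you acknowledge that swaps for different colors interfere but supply no invariant under which the recoloring process terminates with all previously built structure intact. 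Third, there is no counting argument connecting the number of ``failed'' colors to the deficiency $\Delta - (k-1)$, so the specific constant $3k - 2\Delta - 2$ is never derived; your Brooks'-theorem check at the extreme $\Delta = k-1$ confirms one boundary case but does not generalize. These three items are precisely the technical content of \cite{ColoringGraphsContainingADoublyCriticalEdge} (whose machinery of paths between ``lonely'' neighbors is even echoed in this paper's unused \emph{Lonely Path Lemma} theorem environments), and without them your proposal establishes only $\Delta \geq \chi - 1$, which is far weaker than the lemma.
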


The following two lemmas were proved in \cite{reedNote} using matching theory results.
\begin{lem}\label{ReedForChiBIggerThanHalf}
If $G$ is a graph with $\chi(G) > \left \lceil \frac{|G|}{2} \right \rceil$, then 
\[\chi(G) \leq \frac{\omega(G) + \Delta(G) + 1}{2}.\]
\end{lem}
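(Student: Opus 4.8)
The plan is to pass to the complement and use matching theory there. First I would reduce to the case that $G$ is vertex-critical: a $\chi$-critical subgraph $G'$ satisfies $\chi(G') = \chi(G)$, $\omega(G') \le \omega(G)$, $\Delta(G') \le \Delta(G)$, and $|G'| \le |G|$, so the hypothesis $\chi > \lceil |G|/2 \rceil$ descends to $G'$ and a bound for $G'$ yields one for $G$. The payoff is that a vertex-critical graph has $\delta(G) \ge \chi(G) - 1$, so $\bar G$ has maximum degree at most $n - \chi$, which I expect to keep the complement under control.

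The main engine is the elementary inequality $\chi(G) \le n - \nu(\bar G)$, where $\nu$ denotes maximum matching size: a maximum matching of $\bar G$ together with the unmatched vertices covers $\bar G$ by $n - \nu(\bar G)$ cliques, i.e.\ properly colors $G$ with that many colors. Writing $k = \chi(G)$, this gives $\nu(\bar G) \le n - k$, so by the Tutte--Berge formula there is a set $U \subseteq V$ with $o(\bar G - U) - |U| = n - 2\nu(\bar G) \ge 2k - n$, where $o$ counts odd components. The reason for working in the complement is that distinct components of $\bar G - U$ have no edges between them in $\bar G$, hence are completely joined in $G$; choosing one vertex per component thus produces a clique of $G$ of size at least the number of components, so
\[\omega(G) \ge |U| + 2k - n, \qquad \text{equivalently} \qquad 2k \le \omega(G) + \bigl(n - |U|\bigr).\]
It therefore suffices to establish $n - |U| \le \Delta(G) + 1$.

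For this last inequality I would exploit the same complement structure on the degree side: any vertex $v$ in a component $C$ of $\bar G - U$ is adjacent in $G$ to every vertex of $(V \setminus U) \setminus C$, so $\deg_G(v) \ge (n - |U|) - |C|$; taking $v$ in a smallest component, which has at most $(n - |U|)/c$ vertices where $c$ is the number of components, yields $n - |U| \le \Delta + 1$ as soon as $c \ge \Delta + 1$. The main obstacle is exactly the opposite regime, where $\bar G - U$ has few but large components: there a single vertex per component is no longer a big enough clique, and one must instead harvest within each component $C$ a clique of $G$ of size $\alpha(\bar G[C])$ (an independent set of $\bar G[C]$) and argue that these local cliques compensate. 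Balancing the global one-vertex-per-component clique against these local ones — using that a component which is a large clique of $\bar G$ is a large independent set of $G$ and hence forces $\chi$ down through $\chi \le n - \alpha(G) + 1$, while the degree bound in $\bar G$ keeps the local independence numbers large — is where the real work lies, and I expect the Gallai--Edmonds decomposition, rather than the bare Tutte--Berge count, to be the right tool for organizing it.
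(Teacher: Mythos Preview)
The paper does not prove this lemma; it is quoted from \cite{reedNote} with the remark that it was ``proved \ldots\ using matching theory results.'' So there is no in-paper argument to compare against, only the indication that the complement-and-matching framework you set up is indeed the intended one. In that sense your outline is on the right track.

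That said, what you have written is a sketch with a gap you yourself flag, and the gap is genuine. After Tutte--Berge (or Gallai--Edmonds with $U=A$) you reduce to showing $n-|U|\le\Delta(G)+1$, and you observe this follows when $\bar G-U$ has a singleton component (since such a vertex has all its $\bar G$-neighbours in $U$, forcing $|U|\ge\delta(\bar G)=n-1-\Delta$). The trouble is the ``few large components'' case, and your proposed remedy --- harvesting a clique of size $\alpha(\bar G[K])$ inside each component $K$ and balancing it against the degree bound --- does not close the books: a factor-critical component can be a complete graph in $\bar G$, giving $\alpha(\bar G[K])=1$ and no gain over the one-vertex-per-component clique. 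Your fallback $\chi\le n-\alpha(G)+1$ then only yields $|K|<n/2+1$, which is far from the $|K|\le 1$ you need. More structurally, once you replace $\chi$ by $n-\nu(\bar G)$ you are trying to prove $2\bigl(n-\nu(\bar G)\bigr)\le\omega+\Delta+1$, and this fails already for $C_5$ (critical, positive deficiency); the hypothesis $\chi>\lceil n/2\rceil$ is doing more than guaranteeing positive deficiency, and must be used after, not before, the structural decomposition.

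The repair in \cite{reedNote} intertwines the matching structure with an actual optimal colouring rather than passing through $n-\nu(\bar G)$: one works with the singleton colour classes (which already form a clique of size at least $2\chi-n$) and uses the Gallai--Edmonds barrier to locate common neighbours that enlarge this clique up to $2\chi-\Delta-1$. Your ingredients are the right ones, but the order of operations --- colour first, then match --- is what makes the argument go through.
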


\begin{lem}\label{ReedForAlphaAtMostTwo}
If $G$ is a graph with $\alpha(G) \leq 2$, then
\[\chi(G) \leq \left\lceil\frac{\omega(G) + \Delta(G) + 1}{2}\right\rceil.\]
\end{lem}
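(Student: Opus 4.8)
The plan is to exploit the fact that $\alpha(G)\le 2$ forces every color class of any proper coloring of $G$ to have size at most two, which converts the coloring problem into a matching problem on the complement. First I would pass to $H := \bar{G}$; the hypothesis $\alpha(G)\le 2$ says exactly that $H$ is triangle-free. Since each color class is an independent set of $G$ of size $1$ or $2$, and a class of size $2$ is precisely a non-edge of $G$, i.e.\ an edge of $H$, the size-two classes of a coloring form a matching of $H$. Minimizing the number of classes is therefore the same as maximizing the number of size-two classes, so $\chi(G) = n - \nu(H)$, where $n := |G|$ and $\nu(H)$ is the size of a maximum matching of $H$.

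Next I would translate the target inequality through the substitutions $\omega(G) = \alpha(H)$ and $\Delta(G) = n - 1 - \delta(H)$, where $\delta(H)$ is the minimum degree of $H$. Using $\chi(G) = n - \nu(H)$, a short manipulation comparing the integer $n - \nu(H)$ against the ceiling shows that the desired bound $\chi(G) \le \lceil \frac{\omega(G)+\Delta(G)+1}{2}\rceil$ amounts to the matching-deficiency inequality $n - 2\nu(H) \le \alpha(H) - \delta(H) + 1$. Thus the whole statement reduces to proving, for every triangle-free $H$, that the deficiency $\mathrm{def}(H) := n - 2\nu(H)$ satisfies $\mathrm{def}(H) \le \alpha(H) - \delta(H) + 1$.

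For this last step I would invoke the Tutte--Berge formula: choose $U \subseteq V(H)$ attaining $\mathrm{def}(H) = o(H-U) - |U|$, where $o$ counts odd components. If $H-U$ has no odd component, then $\mathrm{def}(H) = 0$ and the inequality is immediate from $\alpha(H) \ge \delta(H)$ (the neighborhood of a minimum-degree vertex is independent, as $H$ is triangle-free). Otherwise, let $K_1$ be an odd component and pick $v \in K_1$; the neighbors of $v$ that survive in $H-U$ form an independent set of size $\deg_H(v) - |N_H(v)\cap U| \ge \delta(H) - |U|$, again by triangle-freeness. Adjoining one vertex from each of the remaining $o(H-U)-1$ odd components (which lie in distinct components of $H-U$, hence are mutually non-adjacent and non-adjacent to the chosen neighborhood) yields an independent set, so $\alpha(H) \ge (\delta(H)-|U|) + (o(H-U)-1)$. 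Since $o(H-U) - |U| = \mathrm{def}(H)$, rearranging gives exactly $\mathrm{def}(H) \le \alpha(H) - \delta(H) + 1$.

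The main obstacle is the independent-set construction: the naive choice of one vertex per odd component only gives $\alpha(H)\ge o(H-U)$, which is too weak when $\delta(H)$ is large while $|U|$ is small. The fix is to spend a single odd component on a large independent neighborhood, gaining $\delta(H)-|U|$, while still harvesting the remaining components one vertex at a time; making the degree loss incurred by deleting $U$ cancel against the $-|U|$ in the Tutte--Berge deficiency is the delicate bookkeeping. Pleasantly, this one estimate is valid whether or not $\delta(H)-|U|$ is positive, so the only genuinely separate case is $o(H-U)=0$, where $\mathrm{def}(H)=0$ and $\alpha(H)\ge\delta(H)$ finishes immediately.
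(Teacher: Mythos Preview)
Your argument is correct. The reduction $\chi(G)=n-\nu(\bar G)$ when $\alpha(G)\le 2$, the translation of the target bound into the deficiency inequality $n-2\nu(H)\le \alpha(H)-\delta(H)+1$ for the triangle-free complement $H$, and the Tutte--Berge construction of a large independent set (neighborhood inside one odd component plus one vertex from each remaining odd component) all go through as written; the inequality $|N_H(v)\setminus U|\ge \delta(H)-|U|$ is valid regardless of sign, so the single case split on $o(H-U)=0$ suffices.

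As for comparison: the paper does not actually prove this lemma. It is quoted from \cite{reedNote} with only the remark that it was obtained ``using matching theory results.'' Your proof is precisely a matching-theory argument (Tutte--Berge on the complement), so it is consistent with, and very likely close to, the cited approach; there is nothing further in the present paper to compare against.
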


\begin{lem}\label{ReedForAtMostTwoFrames}
Let $G$ be a graph for which every optimal coloring has all color classes of order at most $2$. Then 
\[\chi(G) \leq \left\lceil\frac{\omega(G) + \Delta(G) + 1}{2}\right\rceil.\]
\end{lem}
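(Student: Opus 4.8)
The plan is to split on the size of $\chi$ relative to $\lceil n/2\rceil$, where $n = |G|$. Since every optimal coloring partitions $V(G)$ into $\chi$ classes each of order at most $2$, we have $n \le 2\chi$, hence $\chi \ge \lceil n/2\rceil$. If in fact $\chi > \lceil n/2\rceil$, then Lemma \ref{ReedForChiBIggerThanHalf} applies immediately and gives $\chi \le \frac{\omega+\Delta+1}{2} \le \lceil\frac{\omega+\Delta+1}{2}\rceil$. So the entire content lies in the boundary case $\chi = \lceil n/2\rceil$, which I treat next. In this case the size-$\le 2$ classes of an optimal coloring realize a near-perfect matching $M$ of $\bar G$ with $|M| = \nu(\bar G) = \lfloor n/2\rfloor$.

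In the boundary case I would further split on $\alpha = \alpha(G)$. If $\alpha \le 2$ then Lemma \ref{ReedForAlphaAtMostTwo} finishes directly, so assume $\alpha \ge 3$. Here I claim it suffices to prove the single inequality $n \le \omega + \Delta + 1$: since $\lceil\,\cdot\,/2\rceil$ is monotone, this gives $\chi = \lceil n/2\rceil \le \lceil\frac{\omega+\Delta+1}{2}\rceil$, as desired. Rewriting, $n \le \omega + \Delta + 1$ is equivalent to $\omega \ge n - 1 - \Delta$, i.e.\ to the existence of a clique of $G$ whose size equals the number of non-neighbours of a maximum-degree vertex.

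Concretely, fix $v$ with $\deg_G(v) = \Delta$ and set $S = V(G)\setminus N_G[v]$, so $|S| = n - 1 - \Delta$ and $S$ is exactly the neighbourhood of $v$ in $\bar G$. If $S$ is a clique of $G$ we are done. Otherwise there are $x,y \in S$ nonadjacent in $G$, and since $v$ is nonadjacent to both, $T = \{v,x,y\}$ is an independent set of $G$ of order $3$, equivalently a triangle of $\bar G$. The idea is to promote $T$ to a single color class: coloring $G - T$ with a maximum matching of $\bar G - T$ plus singletons uses at most $1 + (n-3) - \nu(\bar G - T)$ colors once $T$ is added, and this is at most $\chi = n - \nu(\bar G)$ exactly when $\nu(\bar G - T) \ge \nu(\bar G) - 2$. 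In that event we obtain a proper coloring with at most $\chi$ colors containing a class of order $3$, contradicting the hypothesis. Thus everything reduces to showing that deleting the triangle $T$ drops the matching number of $\bar G$ by at most $2$.

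The main obstacle is precisely this matching estimate. Deleting three vertices can in principle drop $\nu$ by $3$, and this genuinely occurs for net-like configurations: the complement of the net graph satisfies the hypothesis with $\alpha = 3$, so one cannot simply reduce to $\alpha \le 2$, and in the obstructing position the three $M$-partners of $v,x,y$ form an independent set of $\bar G$ that no single rematching can exploit. The plan to defeat this is to use that $v$ was chosen of \emph{minimum} degree in $\bar G$: I would run a Gallai--Edmonds / augmenting-path analysis to argue that a triangle through a minimum-degree vertex of $\bar G$ can never sit in the obstructing position, so that either an edge of $M$ can be rerouted inside $T$ or an augmenting path frees the needed clique, in either case yielding $\nu(\bar G - T) \ge \nu(\bar G) - 2$ and hence the contradiction. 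Showing that the minimality of $\deg_{\bar G}(v)$ rules out the net-type entanglement is the crux of the whole argument.
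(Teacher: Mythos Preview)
Your reduction to the case $\alpha\ge 3$ is fine, but the proof then has a genuine gap: the matching claim $\nu(\bar G - T)\ge\nu(\bar G)-2$ for a triangle $T$ through a minimum-degree vertex of $\bar G$ is only asserted, not proved. You explicitly flag it as ``the crux of the whole argument'' and leave it to an unspecified Gallai--Edmonds analysis; even if that analysis can be made to work, it is substantial missing content, and as stated the argument is not a proof.

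The detour is also unnecessary. You already observe that exhibiting an optimal coloring with a class of order $3$ contradicts the hypothesis, but you then try to \emph{construct} such a coloring explicitly via matchings in $\bar G - T$. The paper instead uses the contrapositive directly: for \emph{any} independent set $I$ with $|I|\ge 3$ (no need to route it through a max-degree vertex), the hypothesis forces $\chi(G\smallsetminus I)=\chi(G)$, since otherwise a $(\chi(G)-1)$-coloring of $G\smallsetminus I$ together with $I$ as one new class would be an optimal coloring of $G$ containing a class of order $\ge 3$. Setting $H=G\smallsetminus I$, one has $\chi(H)=\chi(G)\ge |G|/2 > \lceil|H|/2\rceil$, so Lemma~\ref{ReedForChiBIggerThanHalf} applies to $H$ and gives
\[
\chi(G)=\chi(H)\le\frac{\omega(H)+\Delta(H)+1}{2}\le\frac{\omega(G)+\Delta(G)+1}{2}.
\]
This handles your boundary case and the strict case $\chi>\lceil n/2\rceil$ simultaneously, with no matching estimate and no structure theory.
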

\begin{proof}
If $\alpha(G) \leq 2$, the result follows by Lemma \ref{ReedForAlphaAtMostTwo}.  Hence  we may assume that we have an independent set $I \subseteq G$ with $|I| \geq 3$.  Put $H = G \smallsetminus I$.  Since $G$ has no optimal coloring containing a color class of order $\geq 3$, we have $\chi(H) = \chi(G)$.  Then
\[\chi(H) = \chi(G) \geq \frac{|G|}{2} = \frac{|H| + 3}{2} > \left\lceil\frac{|H|}{2}\right\rceil.\]
Hence, by Lemma \ref{ReedForChiBIggerThanHalf}, we have 
\[\chi(G) = \chi(H) \leq \frac{\omega(H) + \Delta(H) + 1}{2} \leq  \frac{\omega(G) + \Delta(G) + 1}{2}.\]
The lemma follows.
\end{proof}

\begin{defn}
The \emph{stinginess} of a graph $G$ (denoted $\iota(G)$) is the maximum number of singleton color classes appearing in an optimal coloring of $G$.  An optimal coloring of $G$ is called \emph{stingy} just in case it has the maximum number of singleton color classes.
\end{defn}

\begin{lem}\label{StinginessPreservedInPatchedColorings}
Let $G$ be a graph and $H$ an induced subgraph of $G$.  If $\chi(G) = \chi(G \smallsetminus H) + \chi(H)$, then $\iota(G) \geq  \iota(G \smallsetminus H) + \iota(H)$.
\end{lem}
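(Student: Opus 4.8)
The plan is to prove the inequality by exhibiting a single optimal coloring of $G$ that contains at least $\iota(G \smallsetminus H) + \iota(H)$ singleton color classes. Since $\iota(G)$ is defined as the \emph{maximum} number of singleton classes taken over all optimal colorings of $G$, producing one such coloring is enough to establish the lower bound.

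First I would fix a stingy optimal coloring $c_1$ of $G \smallsetminus H$, using a color set $A$ with $|A| = \chi(G \smallsetminus H)$ and exactly $\iota(G \smallsetminus H)$ singleton classes, together with a stingy optimal coloring $c_2$ of $H$, using a color set $B$ with $|B| = \chi(H)$ and exactly $\iota(H)$ singleton classes. After relabeling I may assume that $A$ and $B$ are disjoint. I then define a coloring $c$ of $G$ by letting $c$ agree with $c_1$ on $V(G \smallsetminus H)$ and with $c_2$ on $V(H)$.

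Next I would verify that $c$ is a proper optimal coloring. Every edge of $G$ either lies inside $G \smallsetminus H$, lies inside $H$ (here I use that $H$ is \emph{induced}), or joins a vertex of $G \smallsetminus H$ to a vertex of $H$. Edges of the first two types are properly colored because $c_1$ and $c_2$ are proper, while an edge of the third type is properly colored automatically, since its endpoints receive colors from the disjoint sets $A$ and $B$. Thus $c$ is proper and uses $|A| + |B| = \chi(G \smallsetminus H) + \chi(H)$ colors, which equals $\chi(G)$ by hypothesis; hence $c$ is optimal.

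Finally I would count singleton classes. Because $A$ and $B$ are disjoint, each color class of $c$ is either a class of $c_1$ lying entirely in $G \smallsetminus H$ or a class of $c_2$ lying entirely in $H$, with no class absorbing vertices from both sides; in particular every singleton class of $c_1$ and of $c_2$ survives as a singleton class of $c$. This yields at least $\iota(G \smallsetminus H) + \iota(H)$ singleton classes in the optimal coloring $c$, so $\iota(G) \geq \iota(G \smallsetminus H) + \iota(H)$. The argument is essentially bookkeeping, so I do not expect a genuine obstacle; the one point that must not be glossed is the use of the hypothesis $\chi(G) = \chi(G \smallsetminus H) + \chi(H)$, which is precisely what forces the disjoint-palette union $c$ to be optimal rather than merely proper, and hence to be an admissible witness for $\iota(G)$.
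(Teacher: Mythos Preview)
Your proof is correct and follows exactly the same ``patch together stingy optimal colorings with disjoint palettes'' approach that the paper uses; you have simply filled in the bookkeeping details (properness across the cut, optimality from the hypothesis, survival of singletons) that the paper leaves implicit.
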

\begin{proof}
Assume that $\chi(G) = \chi(G \smallsetminus H) + \chi(H)$.  Then patching together any optimal coloring of $G \smallsetminus H$ with any optimal coloring of $H$ yields an optimal coloring of $G$.  The lemma follows.
\end{proof}

\begin{lem}\label{ChiAtMostAverageOfStinginessAndOrder}
Let $G$ be a graph. Then $\chi(G) \leq \frac{\iota(G) + |G|}{2}$.
\end{lem}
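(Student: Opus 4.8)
The plan is to extract the inequality directly from a single stingy optimal coloring via a vertex-counting argument. Fix an optimal coloring of $G$ that is stingy, so that it uses exactly $\chi(G)$ color classes and, by definition of stinginess, exactly $\iota(G)$ of these classes are singletons. The remaining $\chi(G) - \iota(G)$ color classes are nonempty (every class in a coloring with $\chi(G)$ colors must be nonempty) and, by construction, not singletons, so each has at least two vertices.

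From here I would simply sum the sizes of the color classes. The singletons contribute $\iota(G)$ vertices, and the non-singleton classes contribute at least $2(\chi(G) - \iota(G))$ vertices, giving
\[
|G| \;\geq\; \iota(G) + 2\bigl(\chi(G) - \iota(G)\bigr) \;=\; 2\chi(G) - \iota(G).
\]
Rearranging yields $2\chi(G) \leq |G| + \iota(G)$, which is exactly the claimed bound after dividing by $2$.

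There is no serious obstacle here: the statement is a counting identity, and the only point requiring a moment's care is the justification that a stingy coloring genuinely realizes $\iota(G)$ singleton classes while all of its other classes have size at least two. Both facts follow immediately from the definitions given above, so the proof should be a short paragraph with a single displayed inequality.
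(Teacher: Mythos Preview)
Your argument is correct and is essentially identical to the paper's proof: both fix a stingy optimal coloring, observe that the non-singleton classes each contain at least two vertices, and derive the bound by the same vertex-count. The only difference is cosmetic---the paper phrases the count as $\chi(G) \leq \iota(G) + \frac{|G| - \iota(G)}{2}$ rather than as $|G| \geq \iota(G) + 2(\chi(G) - \iota(G))$.
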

\begin{proof}
Let $C = \{I_1,\ldots,I_m, \{s_1\}, \ldots, \{s_{\iota(G)}\}\}$ be a stingy coloring of $G$.  Since $|I_j| \geq 2$ for $1 \leq j \leq m$, we have $\chi(G) \leq \iota(G) + \frac{|G| - \iota(G)}{2} = \frac{|G| + \iota(G)}{2}$.
\end{proof}

\section{Respectfully Greedy Partial Colorings}

\begin{defn}
Let $G$ be a graph.  A partial coloring $C$ of $G$ is called \emph{$r$-greedy} just in case every color class has order at least $r$.
\end{defn}

\begin{defn}
Let $G$ be a graph.  A partial coloring of $C$ of $G$ is called \emph{respectful} just in case $\chi(G \smallsetminus \cup C) = \chi(G) - |C|$.
\end{defn}

\begin{lem}\label{RespectfulAndGreedy}
Let $G$ be a graph and $C$ a respectful $3$-greedy partial coloring of $G$ with $|G \smallsetminus \cup C|$ minimal.  Then
\[\chi(G) \leq \frac{\omega(G) + \Delta(G) + 1}{2} + \frac{|C| + 1}{2}.\]
\end{lem}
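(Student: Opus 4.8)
The plan is to delete the colored vertices, reduce to a graph all of whose optimal colorings are ``thin'', apply the matching-based bounds of Section~2, and pay for the deletion with the term $\frac{|C|+1}{2}$. Write $H = G \smallsetminus \cup C$. First I would record the structural consequence of minimality: every optimal coloring of $H$ has all color classes of order at most $2$. Indeed, if some optimal coloring of $H$ contained a class $I$ with $|I| \geq 3$, then $\chi(H \smallsetminus I) = \chi(H) - 1$, so by the patching principle behind Lemma~\ref{StinginessPreservedInPatchedColorings} we get $\chi(G \smallsetminus \cup(C \cup \{I\})) = \chi(H \smallsetminus I) = \chi(G) - |C| - 1$. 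Hence $C \cup \{I\}$ is a respectful $3$-greedy partial coloring with $|G \smallsetminus \cup(C\cup\{I\})| \leq |H| - 3 < |H|$, contradicting minimality.

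Given this, Lemma~\ref{ReedForAtMostTwoFrames} applies to $H$ and yields $\chi(H) \leq \left\lceil \frac{\omega(H) + \Delta(H) + 1}{2} \right\rceil$, whence $2\chi(H) \leq \omega(H) + \Delta(H) + 2$. Respectfulness gives $\chi(G) = \chi(H) + |C|$, so the target inequality, in the form $2\chi(G) \leq \omega(G) + \Delta(G) + |C| + 2$, is equivalent to $2\chi(H) + |C| \leq \omega(G) + \Delta(G) + 2$. Combining this with the previous estimate, it suffices to prove the deletion bound $(\omega(G) - \omega(H)) + (\Delta(G) - \Delta(H)) \geq |C|$: removing the $|C|$ large color classes must cost at least $|C|$ in the combined clique-plus-degree budget.

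I would establish this deletion bound by induction on $|C|$, peeling off one class at a time. For $I \in C$ one checks that $\chi(G \smallsetminus I) = \chi(G) - 1$ and that $C \smallsetminus \{I\}$ is again a respectful $3$-greedy partial coloring of $G \smallsetminus I$ with minimal uncolored part (any strictly better such coloring of $G \smallsetminus I$ could be lifted back by re-adding $I$, contradicting minimality for $G$). The induction then reduces the whole estimate to the single-step claim that deleting one class of a minimal coloring drops $\omega + \Delta$ by at least one, i.e. $(\omega(G) - \omega(G \smallsetminus I)) + (\Delta(G) - \Delta(G \smallsetminus I)) \geq 1$.

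This single-step drop is the crux and the main obstacle. It genuinely uses minimality: without it one can exhibit an independent triple $I$ whose deletion lowers $\chi$ yet preserves both $\omega$ and $\Delta$ — for instance $I$ breaking a lone odd cycle while the extremal clique and all maximum-degree vertices sit elsewhere — and only the greedy maximality of $C$ forbids this, since a minimal class is forced to absorb a vertex essential to a maximum clique or to the maximum degree. To convert this into a proof I expect to invoke the Lonely Path Lemma to locate, inside the deleted class, a vertex that controls $\Delta$ (or a clique vertex controlling $\omega$), so that its removal strictly lowers $\omega + \Delta$. Formalizing that last step, rather than the bookkeeping surrounding it, is where the real difficulty lies.
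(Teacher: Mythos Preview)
Your reduction is correct up through the point where it suffices to show the deletion bound $(\omega(G)-\omega(H)) + (\Delta(G)-\Delta(H)) \geq |C|$. But from there you have overcomplicated matters and, by your own account, left the crux unfinished: you propose to peel off classes one at a time, appeal to a Lonely Path Lemma to force a drop in $\omega + \Delta$, and then concede that formalizing this is the real difficulty. That is a genuine gap.

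The missing observation is much simpler and requires no induction and no lonely paths: minimality of $|H|$ forces every vertex $v \in H$ to have at least one neighbor in \emph{each} class $I \in C$. For if $v$ had no neighbor in $I$, replace $I$ by $I \cup \{v\}$ to obtain $C'$ with $|C'| = |C|$ and $\cup C' = (\cup C)\cup\{v\}$. Then $C'$ is still $3$-greedy, and it is still respectful because any coloring of $H \smallsetminus \{v\}$ together with the classes of $C'$ colors $G$, so $\chi(H\smallsetminus\{v\}) \geq \chi(G) - |C|$, while trivially $\chi(H\smallsetminus\{v\}) \leq \chi(H) = \chi(G) - |C|$. Now $|G \smallsetminus \cup C'| = |H| - 1$, contradicting minimality. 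Consequently every $v\in H$ loses at least $|C|$ neighbors when passing from $G$ to $H$, giving $\Delta(H) \leq \Delta(G) - |C|$ outright --- stronger than the combined $\omega+\Delta$ drop you were aiming for, and with no appeal to $\omega$ at all. Plugging this into $2\chi(H) \leq \omega(H) + \Delta(H) + 2 \leq \omega(G) + \Delta(G) - |C| + 2$ finishes the argument in one line, exactly as the paper does.
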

\begin{proof}
Put $H = G \smallsetminus \cup C$.  By the minimality of $|H|$, every optimal coloring of $H$ has all color classes of order at most $2$.  Thus, by Lemma \ref{ReedForAtMostTwoFrames}, we have 
\[\chi(H) \leq \frac{\omega(H) + \Delta(H) + 1}{2} + \frac{1}{2}.\]

Using the minimality of $|H|$ again, we see that every vertex of $H$ is adjacent to at least one vertex in each element of $C$.  Hence $\Delta(H) \leq \Delta(G) - |C|$.  Putting it all together, we have
\begin{align*}
\chi(G) &= \chi(H) + |C| \\
&\leq \frac{\omega(H) + \Delta(H) + 1}{2} + \frac{1}{2} + |C| \\
&\leq \frac{\omega(H) +  \Delta(G) - |C| + 1}{2} + \frac{1}{2} + |C| \\
&\leq \frac{\omega(G) +  \Delta(G) - |C| + 1}{2} + \frac{1}{2} + |C| \\
&= \frac{\omega(G) +  \Delta(G) + 1}{2} + \frac{|C| + 1}{2}. \\
\end{align*}
\end{proof}

\begin{KeyLemma}
Every graph satisfies $\chi \leq \frac{\iota + \omega + \Delta + n + 2}{4}$.
\end{KeyLemma}
\begin{proof}
Let $C$ be a respectful $3$-greedy partial coloring of a graph $G$ with $|G \smallsetminus \cup C|$ minimal. Since $\chi(G \smallsetminus \cup C) = \chi(G) - |C|$ we have $\iota(G \smallsetminus \cup C) \leq \iota(G)$ (by Lemma \ref{StinginessPreservedInPatchedColorings}).  Applying Lemma \ref{ChiAtMostAverageOfStinginessAndOrder} yields

\begin{align*}
\chi(G) &= \chi(G \smallsetminus \cup C) + |C| \\
&\leq  \frac{\iota(G) + |G| - |\cup C|}{2} + |C| \\
&\leq  \frac{\iota(G) + |G| -|C|}{2}.
\end{align*}

Adding this inequality with the inequality in Lemma \ref{RespectfulAndGreedy} gives

\[2\chi(G) \leq \frac{\iota(G) + \omega(G) + \Delta(G) + |G| + 2}{2}.\]
The lemma follows.
\end{proof}

\section{The Main Results}

\begin{thm}\label{MainDisjunct}
Let $G$ be a graph. Then at least one of the following holds,
\begin{enumerate}
\item $\chi(G) \leq \textstyle\frac{1}{3}\omega(G) + \textstyle\frac{2}{3}(\Delta(G) + 1)$,
\item $\chi(G) \leq \frac{\omega(G) + |G| + \Delta(G) + 3}{4}$.
\end{enumerate}
\end{thm}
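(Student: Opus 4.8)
The plan is to split into two cases according to whether $G$ contains a doubly critical edge. If $G$ does contain one, then conclusion (1) is immediate from Lemma \ref{BarelyStingyImpliesAlmostReed}, and we are done. So the entire burden of the proof reduces to the complementary case: showing that a graph with no doubly critical edge must satisfy conclusion (2).

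The bridge to conclusion (2) is the Key Lemma. Conclusion (2) reads $\chi(G) \leq \frac{\omega(G) + |G| + \Delta(G) + 3}{4}$, whereas the Key Lemma supplies $\chi(G) \leq \frac{\iota(G) + \omega(G) + \Delta(G) + |G| + 2}{4}$. Comparing numerators, conclusion (2) follows automatically from the Key Lemma the moment we know $\iota(G) \leq 1$. Hence it suffices to establish the implication: \emph{if $G$ has no doubly critical edge, then $\iota(G) \leq 1$.}

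I would prove this implication via its contrapositive. Suppose $\iota(G) \geq 2$, so that some optimal coloring of $G$ contains two singleton color classes $\{u\}$ and $\{v\}$. First, $u$ and $v$ must be adjacent: if not, then $\{u,v\}$ is independent, and replacing the two singleton classes by the single class $\{u,v\}$ produces a proper coloring using only $\chi(G) - 1$ colors, contradicting optimality. Second, deleting the two classes $\{u\}$ and $\{v\}$ leaves the remaining $\chi(G) - 2$ classes properly coloring $G \smallsetminus \{u,v\}$, so $\chi(G \smallsetminus \{u,v\}) \leq \chi(G) - 2$; since deleting two vertices can lower $\chi$ by at most $2$, we also have $\chi(G \smallsetminus \{u,v\}) \geq \chi(G) - 2$. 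Equality then shows $uv$ is a doubly critical edge, completing the contrapositive.

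The only delicate point I anticipate is the step identifying the two singletons as the endpoints of a doubly critical edge — specifically, remembering to first rule out nonadjacency by the merge argument before invoking the chromatic count, since double criticality is defined for an \emph{edge}. Once $\iota(G) \leq 1$ is in hand, the remainder is a routine substitution into the Key Lemma, and the disjunction (1) or (2) is assembled from the two cases.
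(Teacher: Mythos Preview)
Your proposal is correct and follows essentially the same route as the paper: assume (1) fails, deduce via Lemma~\ref{BarelyStingyImpliesAlmostReed} that $G$ has no doubly critical edge, conclude $\iota(G)\le 1$, and plug this into the Key Lemma to obtain (2). The paper compresses the middle step (``no doubly critical edge $\Rightarrow\iota<2$'') into a single sentence, whereas you spell out the contrapositive argument with the two singletons explicitly; this added detail is sound and exactly the intended reasoning.
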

\begin{proof}
Assume that (1) does not hold.  Then, by Lemma \ref{BarelyStingyImpliesAlmostReed}, we have $\iota(G) < 2$.  Applying the Key Lemma gives
\[\chi(G) \leq \frac{1 + \omega(G) + \Delta(G) + |G| + 2}{4}.\]  
The theorem follows.
\end{proof}

\begin{cor}
Let $G$ be a graph satisfying $\Delta \geq \frac{n}{2}$.  Then $G$ also satisfies
\[\chi \leq \textstyle\frac{1}{4}\omega +  \textstyle\frac{3}{4}(\Delta + 1).\]
\end{cor}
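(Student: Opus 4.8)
The plan is to invoke Theorem \ref{MainDisjunct} and verify that each of its two alternatives yields the desired bound—the first unconditionally, and the second using the hypothesis $\Delta \geq \frac{n}{2}$. So I would begin by applying Theorem \ref{MainDisjunct} to $G$, which guarantees that at least one of alternatives (1) and (2) holds, and then handle the two cases separately.

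First I would suppose alternative (1) holds, so that $\chi(G) \leq \frac{1}{3}\omega + \frac{2}{3}(\Delta + 1)$. Comparing this with the target $\frac{1}{4}\omega + \frac{3}{4}(\Delta + 1)$, I would subtract and clear denominators; I expect the difference (target minus alternative (1)) to reduce to $\frac{1}{12}\bigl((\Delta + 1) - \omega\bigr)$. Since every graph satisfies $\omega \leq \Delta + 1$ (each vertex of a largest clique has degree at least $\omega - 1$), this difference is nonnegative, so alternative (1) already implies the target bound with no use of the degree hypothesis.

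Next I would suppose alternative (2) holds, so that $\chi(G) \leq \frac{\omega + n + \Delta + 3}{4}$. Writing the target as $\frac{\omega + 3\Delta + 3}{4}$, I would compare numerators: the target dominates precisely when $n + \Delta \leq 3\Delta$, that is, when $n \leq 2\Delta$. This is exactly the hypothesis $\Delta \geq \frac{n}{2}$, so alternative (2) yields the target bound as well. Since at least one alternative holds and each implies the conclusion, the corollary follows.

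I do not anticipate a genuine obstacle here, as the two inequality checks are routine once $\omega \leq \Delta + 1$ is recalled for the first case. The only real content is the observation that the degree hypothesis $\Delta \geq \frac{n}{2}$ is precisely calibrated to convert the stray $n$ appearing in alternative (2) of Theorem \ref{MainDisjunct} into the extra $\Delta$ needed for the target; indeed the corollary is essentially just the specialization of that theorem to the regime where $n$ is controlled by $\Delta$.
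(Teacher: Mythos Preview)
Your proposal is correct and follows essentially the same route as the paper: invoke Theorem~\ref{MainDisjunct}, bound alternative~(1) by the target using $\omega \le \Delta + 1$, and bound alternative~(2) by the target using the hypothesis $n \le 2\Delta$. The paper packages the two cases as a single $\max$ and a chain of inequalities, but the content is identical.
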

\begin{proof}
By Theorem \ref{MainDisjunct}, $G$ satisfies
\begin{align*}
\chi &\leq \max\{\textstyle\frac{1}{3}\omega + \textstyle\frac{2}{3}(\Delta + 1), \frac{\omega + n + \Delta + 3}{4}\}  \\
&\leq \max\{\textstyle\frac{1}{4}\omega + \textstyle\frac{3}{4}(\Delta + 1), \frac{\omega + n + \Delta + 3}{4}\}  \\
&\leq \max\{\textstyle\frac{1}{4}\omega + \textstyle\frac{3}{4}(\Delta + 1), \frac{\omega + 3\Delta + 3}{4}\}  \\
&=\textstyle\frac{1}{4}\omega + \textstyle\frac{3}{4}(\Delta + 1).
\end{align*}
\end{proof}

We would like to find an upper bound on the chromatic number that must hold for a graph or its complement.  The previous corollary is not quite good enough for this purpose since it doesn't handle $\frac{n - 1}{2}$-regular graphs.  Instead, we use the following.

\begin{cor}\label{LargeDeltaImpliesQuarterBound}
Let $G$ be a graph satisfying $\Delta \geq \frac{n - 1}{2}$.  Then $G$ also satisfies
\[\chi \leq \textstyle\frac{1}{4}\omega +  \textstyle\frac{3}{4}\Delta + 1.\]
\end{cor}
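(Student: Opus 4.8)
The plan is to mirror the proof of the previous corollary, replacing the hypothesis $\Delta \geq \frac{n}{2}$ by the weaker $\Delta \geq \frac{n-1}{2}$ and tracking how this weakening affects the additive constant. First I would apply Theorem \ref{MainDisjunct} to obtain
\[\chi \leq \max\left\{\textstyle\frac{1}{3}\omega + \textstyle\frac{2}{3}(\Delta+1),\ \frac{\omega + n + \Delta + 3}{4}\right\},\]
and then bound each of the two terms separately by the target value $\frac{1}{4}\omega + \frac{3}{4}\Delta + 1$.

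For the first term I would invoke the elementary inequality $\omega \leq \Delta + 1$, which holds because a clique on $\omega$ vertices forces a vertex of degree at least $\omega - 1$. This gives $\frac{1}{3}\omega + \frac{2}{3}(\Delta+1) \leq \frac{1}{4}\omega + \frac{3}{4}(\Delta+1)$, since after cancellation the required inequality reduces to $\frac{1}{12}\omega \leq \frac{1}{12}(\Delta+1)$. Since $\frac{3}{4}(\Delta+1) = \frac{3}{4}\Delta + \frac{3}{4} \leq \frac{3}{4}\Delta + 1$, the first term is at most the target.

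For the second term I would use the hypothesis in the rearranged form $n \leq 2\Delta + 1$, which yields
\[\frac{\omega + n + \Delta + 3}{4} \leq \frac{\omega + 3\Delta + 4}{4} = \textstyle\frac{1}{4}\omega + \textstyle\frac{3}{4}\Delta + 1,\]
matching the target exactly. Taking the maximum of the two bounds then completes the argument.

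The proof is essentially routine, being two linear estimates that meet at the common value $\frac{1}{4}\omega + \frac{3}{4}\Delta + 1$; I do not expect a genuine obstacle. The one point worth flagging is bookkeeping rather than difficulty: the weaker hypothesis gives $n \leq 2\Delta + 1$ instead of $n \leq 2\Delta$, and this extra $+1$ inside the numerator costs exactly $\frac{1}{4}$ in the final additive constant. That is precisely why the stated bound here is $\frac{1}{4}\omega + \frac{3}{4}\Delta + 1$ rather than the sharper $\frac{1}{4}\omega + \frac{3}{4}(\Delta+1)$ obtained in the previous corollary.
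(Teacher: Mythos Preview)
Your proposal is correct and follows essentially the same route as the paper: apply Theorem~\ref{MainDisjunct}, bound the first branch via $\omega \leq \Delta+1$ to reach $\frac{1}{4}\omega+\frac{3}{4}(\Delta+1)\leq \frac{1}{4}\omega+\frac{3}{4}\Delta+1$, and bound the second branch via the hypothesis rewritten as $n\leq 2\Delta+1$ to reach $\frac{\omega+3\Delta+4}{4}$. Your exposition is in fact more explicit than the paper's, which presents the same chain of inequalities without naming the intermediate facts.
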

\begin{proof}
By Theorem \ref{MainDisjunct}, $G$ satisfies
\begin{align*}
\chi &\leq \max\{\textstyle\frac{1}{3}\omega + \textstyle\frac{2}{3}(\Delta + 1), \frac{\omega + n + \Delta + 3}{4}\}  \\
&\leq \max\{\textstyle\frac{1}{4}\omega + \textstyle\frac{3}{4}(\Delta + 1), \frac{\omega + n + \Delta + 3}{4}\}  \\
&\leq \max\{\textstyle\frac{1}{4}\omega + \textstyle\frac{3}{4}(\Delta + 1), \frac{\omega + 3\Delta + 4}{4}\}  \\
&=\textstyle\frac{1}{4}\omega + \textstyle\frac{3}{4}\Delta + 1.
\end{align*}
\end{proof}

Since every graph satisfies $\Delta + \bar{\Delta} \geq \Delta + n - 1 - \Delta = n - 1$, combining the pigeonhole principle with Corollary \ref{LargeDeltaImpliesQuarterBound} proves the following.

\begin{MainResult}
Let $G$ be a graph.  Then at least one of $G$ or $\bar{G}$ satisfies
\[\chi \leq \textstyle\frac{1}{4}\omega +  \textstyle\frac{3}{4}\Delta + 1.\]
\end{MainResult}

\section{Some Related Results}
In \cite{ColoringAndTheLonelyGraph} the following was proven.

\begin{lem}\label{VeryStingyImpliesReed}
If $G$ is a graph with $\iota(G) > \frac{\omega(G)}{2}$, then
\[\chi(G) \leq \frac{\omega(G) + \Delta(G) + 1}{2}.\]
\end{lem}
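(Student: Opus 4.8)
The plan is to analyze a stingy optimal coloring, extract a large clique from its singleton classes, and then run an edge count of that clique against $\Delta$.

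First I would fix a stingy optimal coloring of $G$, with singleton classes $\{s_1\},\dots,\{s_\iota\}$ and remaining classes $C_1,\dots,C_a$, where $a=\chi(G)-\iota$ and each $C_t$ has order at least $2$. Since between any two color classes of an optimal coloring there must be an edge, the only possible edge between two singleton classes is $s_is_j$; hence $K:=\{s_1,\dots,s_\iota\}$ is a clique and $\omega(G)\geq\iota$. With $\iota=\iota(G)$, the hypothesis $\iota>\tfrac{\omega}{2}$ then says that $K$ is a clique on more than half of a maximum clique's worth of vertices, and this slack is what I intend to spend.

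Next I would set up the key edge count. For a singleton $s_i$ and a class $C_t$, optimality forces $|N(s_i)\cap C_t|\geq 1$; I call the pair $(i,t)$ \emph{lonely} when equality holds, and let $\ell$ be the number of lonely pairs. Summing degrees over $K$ and separating the $\iota(\iota-1)$ internal contributions from the edges running to the classes $C_t$ gives
\[
\iota\,\Delta(G)\;\geq\;\sum_{i=1}^{\iota}\deg(s_i)\;\geq\;\iota(\iota-1)+2\iota a-\ell,
\]
which rearranges to $\Delta(G)\geq 2\chi(G)-\iota-1-\ell/\iota$. In particular, if no pair is lonely then $\ell=0$ and, using $\iota\leq\omega$, we already obtain $\Delta\geq 2\chi-\omega-1$, i.e.\ the desired bound (and this case does not even need $\iota>\omega/2$). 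In general the bound $\chi\leq\tfrac{\omega+\Delta+1}{2}$ follows as soon as $\ell\leq\iota(\omega-\iota)$, so the whole lemma reduces to establishing this ceiling on the total loneliness.

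To control $\ell$ I would use a Kempe-swap argument in the spirit of the Lonely Path Lemma. If $(i,t)$ is lonely, with $u$ the unique neighbor of $s_i$ in $C_t$, then the two-colored component of $s_i$ in colors $i$ and $t$ is exactly the edge $s_iu$, since $C_t$ is independent; swapping these two colors therefore yields another stingy optimal coloring in which $\{u\}$ is a singleton class. Its singleton clique, which contains every $s_k$ with $k\neq i$ together with $u$, combined with $s_i\sim u$, shows that $K\cup\{u\}$ is a clique of order $\iota+1$. Iterating along a maximal chain of such swaps should build a single growing clique whose order cannot exceed $\omega$, and the hypothesis $\iota>\omega/2$ (equivalently $\omega-\iota<\iota$) is exactly the budget that forces the chain to terminate before it would overspend, which should pin $\ell\leq\iota(\omega-\iota)$. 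Making this last association precise — charging the lonely pairs to the at most $\omega-\iota$ vertices by which the clique may grow, without double counting across successive swaps — is the step I expect to be the main obstacle, and is presumably where the full Lonely Path Lemma is required.
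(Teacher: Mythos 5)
Everything you prove, you prove correctly: the singletons of a stingy coloring form a clique $K$, the degree count $\iota\Delta \geq \iota(\iota-1)+2\iota a-\ell$ is right, the lemma does reduce to the inequality $\ell \leq \iota(\omega-\iota)$, and a single Kempe swap on a lonely pair $(i,t)$ does yield another stingy coloring whose singleton set is $(K\smallsetminus\{s_i\})\cup\{u\}$, hence a clique $K\cup\{u\}$ of order $\iota+1$. But the proof stops exactly where the lemma lives: the ceiling $\ell\leq\iota(\omega-\iota)$ is never established, and the iteration you sketch for it does not work as described. Swaps from the same singleton cannot be composed: once you swap on $(i,t)$, the pair $(i,t')$ no longer exists ($s_i$ is no longer a singleton), and the new singleton $u$ need not be lonely with respect to $C_{t'}$, so nothing forces $u\sim u'$ --- which is precisely what your charging scheme needs. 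Worse, a ``maximal chain'' of swaps is not a coherent notion here: after the swap, $u$ is \emph{automatically} lonely with respect to the class $(C_t\smallsetminus\{u\})\cup\{s_i\}$ with unique neighbor $s_i$ (since $C_t$ was independent, $u$ has no other neighbors there), so every chain can backtrack; longer cycles are possible too, and along any such chain the cliques produced are each of order $\iota+1$ but need not have a common extension, so there is no ``single growing clique'' whose size could ever exceed $\omega$ and force termination.

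The gap is essential, not cosmetic, because the inequality you reduced to is \emph{false} without the hypothesis: for $C_5$ with the stingy coloring $\{1\},\{2,4\},\{3,5\}$ one has $\iota=1$, $\omega=2$, and both pairs are lonely, so $\ell=2>1=\iota(\omega-\iota)$ (and indeed $\chi=3>\frac{\omega+\Delta+1}{2}$). So any proof of your claimed ceiling must use $\iota>\frac{\omega}{2}$ in an essential and global way --- the other singletons must participate --- whereas your swap argument is entirely local to one singleton and invokes the hypothesis only as an unexplained ``budget.'' (A weaker claim would suffice, namely that \emph{some} singleton is lonely for at most $\omega-\iota$ classes, since one may take a maximum rather than an average over $K$; but your sketch does not deliver that either.) Note also that the paper itself does not prove this lemma: it is imported from \cite{ColoringAndTheLonelyGraph}, and the missing step you point to at the end is exactly the content of the Lonely Path Lemma machinery of that reference, so what you have is a correct reduction and a correct first swap, with the core of the argument absent.
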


\begin{thm}
Let $G$ be a graph. Then at least one of the following holds,
\begin{enumerate}
\item $\chi(G) \leq \frac{\omega(G) + \Delta(G) + 1}{2}$,
\item $\chi(G) \leq \frac{3}{8}\omega(G) + \frac{|G| + \Delta(G) + 2}{4}$.
\end{enumerate}
\end{thm}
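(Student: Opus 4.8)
The plan is to mirror the proof of Theorem \ref{MainDisjunct} almost verbatim, simply swapping out the doubly-critical-edge input (Lemma \ref{BarelyStingyImpliesAlmostReed}) for the new stinginess criterion supplied by Lemma \ref{VeryStingyImpliesReed}. First I would assume that alternative (1) fails, so that $\chi(G) > \frac{\omega(G) + \Delta(G) + 1}{2}$. Reading Lemma \ref{VeryStingyImpliesReed} in contrapositive form, the failure of (1) forces $\iota(G) \leq \frac{\omega(G)}{2}$: if the stinginess were larger than $\frac{\omega(G)}{2}$ the lemma would hand us exactly the bound we just assumed to be violated.

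Next I would feed this inequality into the Key Lemma. Since every graph satisfies $\chi \leq \frac{\iota + \omega + \Delta + n + 2}{4}$, substituting the bound $\iota(G) \leq \frac{\omega(G)}{2}$ gives
\[\chi(G) \leq \frac{\frac{1}{2}\omega(G) + \omega(G) + \Delta(G) + n + 2}{4} = \frac{3}{8}\omega(G) + \frac{n + \Delta(G) + 2}{4},\]
which is precisely alternative (2). Thus the failure of (1) yields (2), establishing the disjunction.

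I do not anticipate any genuine obstacle: once both ingredients are in hand the argument is a single substitution, structurally identical to the way Theorem \ref{MainDisjunct} combined the stinginess bound $\iota < 2$ with the Key Lemma. The only point requiring a little care is the arithmetic, namely checking that the $\frac{3}{2}\omega$ term divided through by $4$ collapses to the coefficient $\frac{3}{8}$ and that the remaining contributions assemble into $\frac{n + \Delta + 2}{4}$ exactly as written in (2).
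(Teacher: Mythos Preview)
Your proposal is correct and matches the paper's own proof essentially line for line: assume (1) fails, invoke Lemma \ref{VeryStingyImpliesReed} in contrapositive to get $\iota(G) \leq \frac{\omega(G)}{2}$, then substitute into the Key Lemma and simplify. There is nothing to add.
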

\begin{proof}
Assume that (1) does not hold.  Then, by Lemma \ref{VeryStingyImpliesReed}, we have $\iota(G) \leq \frac{\omega(G)}{2}$.  Applying the Key Lemma gives
\[\chi(G) \leq \frac{\frac{\omega(G)}{2} + \omega(G) + \Delta(G) + |G| + 2}{4}.\]  
The theorem follows.
\end{proof}


\begin{thebibliography}{1}

\bibitem{reedNote}
Landon Rabern.
\newblock A Note On Reed's Conjecture
\newblock  {\em SIAM J. Discrete Math.}, To Appear.

\bibitem{OnGraphAssociations} 
Landon Rabern.
\newblock On Graph Associations.
\newblock {\em SIAM J. Discrete Math.}, 20(2):529--535, 2006.

\bibitem{ColoringAndTheLonelyGraph}
Landon Rabern.
\newblock Coloring and The Lonely Graph.
\newblock {\em arXiv:0707.1069}, 2007.

\bibitem{ColoringGraphsContainingADoublyCriticalEdge}
Landon Rabern.
\newblock Coloring Graphs Containing a Doubly Critical Edge.
\newblock {\em Submitted to Journal of Graph Theory}, 2007.

\bibitem{reed97}
Bruce Reed.
\newblock $\omega$, $\Delta$, and $\chi$.
\newblock {\em J. Graph Theory.}, 27:177-212, 1997.


\end{thebibliography}
\end{document}